\definecolor{shadecolor}{gray}{0.9}
\theoremstyle{plain}  
\newtheorem{theorem}{Theorem}[section] 
\newtheorem{lemma}[theorem]{Lemma}
\theoremstyle{definition} 
\newtheorem{definition}[theorem]{Definition}
\newtheorem{example}[theorem]{Example}
\newtheorem{assumption}{Assumption}
\theoremstyle{remark} 
\newtheorem{remark}[theorem]{Remark}
\newcommand{\prob}{\mathsf{P}}
\newcommand{\diff}{\,\mathrm{d}}
\newcommand{\E}{\mathsf{E}}
\newcommand{\R}{\mathbb{R}}
\renewcommand{\subset}{\subseteq}
\begin{document}

\title{T-calibration in semi-parametric models}
\author{Anja M\"uhlemann\thanks{University of Bern, \texttt{anja.muehlemann@unibe.ch}}\;  and Johanna Ziegel\thanks{ETH Zurich, \texttt{ziegel@stat.math.ethz.ch}}}
\maketitle

\begin{abstract}
This note relates the calibration of models to the consistent loss functions for the target functional of the model. We demonstrate that a model is calibrated if and only if there is a parameter value that is optimal under all consistent loss functions.
\end{abstract}


\section{Introduction}

When probability predictions $p \in [0,1]$ for the success of a binary outcome $Y \in \{0,1\}$ are issued, it is broadly accepted that such predictions should be \emph{calibrated} in the sense that
\[
\prob(Y = 1 \mid p) = p, \quad \text{almost surely.}
\]
Similarly, when probabilistic predictions in the form of a distribution over the possible values of the outcome $Y$ are issued, these predictions ought to be calibrated in a suitable sense in order to ensure the statistical compatibility between predictions and observations \citep{Dawid1984,DieboldGuntherETAL1998,StrahlZiegel2017,GneitingResin2023}. More recently, it has also been argued that point-predictions for statistical functionals, and respective statistical models should be calibrated in a suitable sense \citep{GneitingResin2023,WuthrichZiegel2024}. 

In this introduction, we will assume that the mean or expectation is the functional of interest but the results of the paper are for a general identifiable (and elicitable) functional such as quantiles, expectiles, moments, robust location functionals, or ratios of expectations. Let $(X,Y)$ be a random vector with $X \in \mathcal{X}$, $Y \in \R$ and assume that $Y$ has finite mean.  A model, $m(x;\beta)$, $\beta \in \Theta$, is \emph{(expectation-)calibrated} if
\begin{equation*}
\E\left( Y|m(X;\beta^\ast)\right) =m(X;\beta ^{\ast }) \quad 
\text{a.s.~for some }\beta ^{\ast }\in  \Theta.
\end{equation*}
Clearly, if the model is correctly specified, that is,
\begin{equation}\label{eq:cor_spec}
\E\left( Y|X\right) =m(X;\beta ^{\ast }) 
\quad \text{a.s.~for some }\beta ^{\ast }\in  \Theta,
\end{equation}
it is also calibrated. However, the converse may not hold. This can easily be seen by taking a constant model $m(X;\beta) = m(\beta)$ that does not depend on the covariate $X$. Then, the model is calibrated as soon as $m(\beta)$ takes the value $\E[Y]$ for some $\beta \in \Theta$. Expectation-calibration of a model is related to the concept of self-consistency, whose connection to regression is discussed in \citep[Section 3]{TarpeyFlury1996}. 

In this paper, we consider the situation where it is desired to remain agnostic about the conditional distribution of $Y$ given $X$. This situation is often termed semi-parametric since the model for the conditional mean is parametric, whereas there are no further assumptions on the conditional distribution of $Y$ given $X$ (other than the necessary assumptions ensuring that the conditional functionals even exist). Whether or not a model is calibrated (and the correct parameter $\beta^\ast$ has been found at least approximately by $\hat{\beta}$), can be checked empirically based only on model predictions $m(x_1;\hat{\beta}),\dots,m(x_n;\hat{\beta})$ and corresponding outcomes $y_1,\dots,y_n$; see \citet{GneitingResin2023}. This is in contrast to assessing the correct specification of a model that is typically much harder.

A Bregman loss function is a function of the form
\begin{equation}\label{eq:Bregman}
L_\phi(z,y) = \phi(y) - \phi(z) - \phi'(z)(y-z),\quad z,y \in \R,
\end{equation}
where $\phi$ is a convex function with subgradient $\phi'$. If $\phi$ is strictly convex, $\phi(Y)$ has finite mean, and the model is correctly specified as at \eqref{eq:cor_spec} with a unique correct parameter $\beta^\ast$, we obtain that
\begin{equation}\label{eq:beta_ast}
    \beta^\ast =  \arg\min_{\beta \in \Theta} \E L_\phi(m(X;\beta),Y).
\end{equation}
The Bregman loss functions at \eqref{eq:Bregman} are all consistent loss functions for the mean in the sense of \citet[Definition 1]{Gneiting2011}. More precisely, \citet{Savage1971} has shown that, under mild regularity conditions, any loss function $L$ that satisfies $\E (L(\E Y,Y)) \le \E(L(x,Y))$ for all $x \in \R$ and all distributions of $Y$ with compact support, has to be of the form \eqref{eq:Bregman} for a convex function $\phi$. Furthermore, $\E (L(\E Y,Y)) = \E(L(x,Y))$ implies $x = \E Y$ if and only if, additionally, $\phi$ is strictly convex; see also \citet[Theorem 7]{Gneiting2011}. 

Equation \eqref{eq:beta_ast} suggests M-estimators for the unknown parameter $\beta^\ast$, and indeed, 
under correct specification, minimizing the sample mean of a Bregman loss 
\begin{equation}\label{eq:M-est}
\hat{\beta}_{\phi ,n}:= \arg \min_{\beta }\frac{1}{n}\sum_{i=1}^{n}L_\phi\left( y_i,m\left( x_i;\beta \right)\right) 
\end{equation}
yields a consistent estimator of $\beta ^{\ast }$ for \emph{any} choice of
Bregman loss function, subject to moment conditions. Conversely, \citet{DimitriadisFisslerETAL2024} have shown that the only loss functions that yield consistent M-estimators as in \eqref{eq:M-est}, without substantially restricting the class of distributions for $Y$, are Bregman losses.

For any strictly convex $\phi$, let us define $\beta(\phi) =\arg\min_{\beta \in \Theta} \E L_\phi(m(X;\beta),Y)$ and assume that this $\arg\min$ is unique for simplicity. Then, whether or not the model is correctly specified, we have that $\hat{\beta}_{\phi,n}$ is consistent for $\beta(\phi)$ under regularity conditions. Under correct model specification, $\beta(\phi) = \beta^\ast$ for all $\phi$. Under model misspecification, it is generally (though not always) the case that $\beta(\phi_1)\neq \beta(\phi_{2})$. This also affects estimation, since the model is typically not correctly specified for the empirical distribution of the data, and hende, the parameter estimate is sensitive to the choice of loss function used in estimation \citep{Patton2020}. 

In this paper, we relate the calibration of models to the optimal parameters $\beta(\phi)$ under different loss functions. It turns out that if there is a parameter $\beta^*$ that is preferred under all consistent loss functions, that is, under any Bregman loss function in the case of the mean, then the model is calibrated. We will provide the characterization not only for models of the conditional mean but for general models for conditional identifiable and elicitable functionals including quantiles and expectiles; see Section \ref{sec:T-cal}. Our approach relies on the mixture or Choquet representation of consistent scoring functions discussed by \citet{EhmGneitingETAL2016}; see Section \ref{sec:def} for the necessary definitions. In Section \ref{sec:Pareto}, we introduce Pareto optimal parameters and illustrate their properties. The paper closes with a discussion of the results in Section \ref{sec:discussion}.

\section{Identifiable functionals and mixture representations} \label{sec:def}

Following \citet[Definition 1]{JordanMuhlemannETAL2022}, we define an identification function as a function $V:\R \times \R \to \R$ such that $V(\cdot,y)$ is increasing and left-continuous for all $y \in \R$. For any probability measure $P$ on $\R$ such that $\bar{V}(z,P) = \int V(z,y)\diff P(y)$ exists for all $z \in \R$, we define the functional $T$ induced by $V$ as
\[
T(P) = [T_P^-,T_P^+] \subseteq [-\infty,\infty],
\]
where
\[
T_P^- = \sup\{z \mid \bar{V}(z,P) < 0\} \quad \text{and}\quad T_P^+ = \inf\{z \mid \bar{V}(z,P) > 0\}. 
\]
If $T_P^+ = T_P^-$ for all relevant probability measures $P$, we identify $T(P)$ with this value. 

\begin{example}\label{ex:Vfunc}
For the mean, we have the identification function $V(z,y) = z - y$, for the second moment, $V(z,y) = z - y^2$ is an identification function, and for the $\alpha$-quantiles, one can take the identification function $V(z,y) = \mathbbm{1}\{y < z\} - \alpha$. 
\end{example}
Further examples including some robust location functionals are given in \citet[Table 1]{JordanMuhlemannETAL2022}.


For a given identification function $V$, we define the elementary loss functions as 
\begin{equation}\label{eq:generalElem}
S_\eta(z,y) = (\mathbbm{1}\{\eta \le z\} - \mathbbm{1}\{\eta \le y\})V(\eta,y)
\end{equation}
for $\eta,z,y\in\R$. By \citet[Proposition 1]{JordanMuhlemannETAL2022}, the elementary loss functions $S_\eta$ are consistent relative to the class $\mathcal{P}$ of all probability measure $P$ such that $\bar{V}(\eta,P)$ exists, that is, for all $P \in \mathcal{P}$, $t \in T(P)$, and $z \in \R$ it holds that
\begin{equation}\label{eq:consistent}
\E S_\eta(t,Y) \le \E S_\eta(z,Y),
\end{equation}
where $Y \sim P$, that is, $Y$ has distribution $P$. As a consequence, also all loss functions for the form 
\begin{equation}\label{eq:Choquet}
L_H(z,y) = \int_{-\infty}^{\infty} S_{\eta}(z,y) \diff H(\eta), \quad z,y \in \R,
\end{equation}
for some positive measure $H$ on $\mathbb{R}$, are consistent for $T$, that is, \eqref{eq:consistent} holds with $S_\eta$ replaced by the loss function $L_H$. 

By Osband's principle, a mixture representation such as \eqref{eq:Choquet} is always available for sufficiently regular loss functions if the functional $T$ is identifiable with identification function $V$; see \citet{Gneiting2011,SteinwartPasinETAL2014,Ziegel2016}. In particular, all loss functions for the functionals in Example \ref{ex:Vfunc} and in \citet[Table 1]{JordanMuhlemannETAL2022} that are consistent with respect to the maximal possible domain of definition of the respective functionals are of the form \eqref{eq:Choquet}. For quantiles and expectiles, this Choquet or mixture representation of consistent loss functions was discussed in detail by \citet{EhmGneitingETAL2016}.

\section{T-calibrated models}\label{sec:T-cal}

Let $(X,Y)$ be a pair of random variables where $X$ takes values in $\mathcal{X}$ and $Y$ is real-valued. Let $V$ be an identification function and assume that all conditional distributions $\mathcal{L}(Y \mid X = x)$ are contained in the class $\mathcal{P}$ of distributions $P$ such that $\bar{V}(z,P)$ exists for all $z \in \R$. Let $T$ be the functional induced by $V$.  

Consider a model $m(x;\beta)$, $\beta \in \Theta$, for 
\[
T(Y|X=x) = T(\mathcal{L}(Y|X=x)), \quad x \in \mathcal{X},
\]
where we slightly abuse notation and abbreviate $T(\mathcal{L}(Y|X=x))$ to $T(Y|X=x)$. The model is correctly specified if $m(X;\beta^\ast) \in T(Y|X)$ a.s.~for some $\beta^\ast \in \Theta$. Inspired by \citet{GneitingResin2023}, we define T-calibration as follows.
\begin{definition}
    The model is \emph{T-calibrated} if
    \[
    m(X;\beta^*) \in T(Y | m(X;\beta^*)) \quad \text{a.s.~for some $\beta^\ast \in \Theta$.}
    \]
\end{definition}

Let $L_H$ be a consistent loss function for $T$ of the form \eqref{eq:Choquet}. We always have
\begin{equation}\label{eq:1}
\E L_H(g(X),Y) \le \inf_{\beta\in\Theta} \E L_H(m(X;\beta),Y),
\end{equation}
if $g(X) \in T(Y|X)$ almost surely. 
If the model is correctly specified, that is, $m(X,\beta^\ast) \in \in T(Y|X)$ almost surely for some $\beta^* \in \Theta$, then we have equality in \eqref{eq:1} and the infimum is in fact a minimum that is attained at $\beta^*$. This holds independently of which loss function $L_H$ we have chosen. 
Let $\beta(H)$ be such that $\min_{\beta\in\Theta} \E L_H(m(X;\beta),Y) = \E L_H(m(X;\beta(H)),Y)$. 

Conversely, we would like to show that if the infimum in \eqref{eq:1} is attained at $\beta(H)$ and $\beta(H) = \beta^*$ for any measure $H$ and associated loss function $L_H$, then we have $ m(X;\beta^*) \in T(Y|X)$ almost surely. This conjecture is generally false. Suppose for example that $\Theta$ only has one element $\Theta=\{\beta^*\}$. Then we always trivially have $\beta(\phi) = \beta^*$ but there is no good reason why $m(\cdot\,;\beta^*)$ should be correctly specified. The same problem appears if for example $X = (X_1,\dots,X_k)$ and $T(Y|X=x)$ depends on $x_k$ but $m(x;\beta^*) = T(Y|(X_1,\dots,X_{k-1})=(x_1,\dots,x_{k-1}))$, or, in other words: As soon as all $m(X;\beta)$ are measurable with respect to a strict sub-$\sigma$-algebra $\mathcal{A}$ of $\sigma(X)$ and the optimal predictor with respect to $\mathcal{A}$ is in our model (and has parameter $\beta^*$), then we will have $\beta(H) = \beta^*$ for all $H$. If $T(Y|X)$ is not $\mathcal{A}$ measurable, the conjecture cannot hold. Both counter-examples are not surprising. 

Instead, we can show instead that if there is a unique optimal parameter under all consistent loss functions $L_H$, then the model is T-calibrated. In fact, we only need to assume that there a parameter $\beta^* \in \Theta$ such that the infimum in \eqref{eq:1} is attained at $\beta^*$ for any elementary loss function $L_H = S_\eta$, $\eta \in \R$ as defined at \eqref{eq:generalElem}. This simplifies things in particular with regards to integrability assumptions in the formulation of Theorem \ref{thm:1}.

We make the following assumption on the model, which is always satisfied if the model allows for an arbitrary intercept. 
\begin{assumption}\label{assu:2}
    Let $\epsilon > 0$.  Suppose that for any model $m(\cdot\,;\beta)$ and $a \in (-\epsilon,\epsilon)$, there is $\beta' \in \Theta$ such that $m(\cdot\,;\beta) + a = m(\cdot\,;\beta')$.  
\end{assumption}

\begin{theorem}\label{thm:1}
Suppose that Assumptions \ref{assu:2} holds, and that there exists $\beta^* \in \Theta$ such that $\inf_{\beta \in \Theta} \E S_\eta(m(X;\beta),Y)= \E S_\eta(m(X;\beta^*),Y)$ for all $\eta \in \R$. 
Assume that the random variables $V(m(X;\beta^*)\pm \epsilon',Y)$ are integrable for some $\epsilon' > 0$. 
Then \[m(X;\beta^*) \in T(Y|m(X;\beta^*)).\] 
\end{theorem}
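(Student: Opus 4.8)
The plan is to write $Z:=m(X;\beta^{\ast})$, to denote by $F:=\mathcal{L}(Z)$ the law of $Z$ and by $Q_z:=\mathcal{L}(Y\mid Z=z)$ the conditional law of $Y$ given $Z=z$, so that $\bar{V}(\eta,Q_z)=\E[V(\eta,Y)\mid Z=z]$ and the conclusion $m(X;\beta^{\ast})\in T(Y\mid m(X;\beta^{\ast}))$ reads $Z\in T(Q_Z)$ almost surely. Since $\bar{V}(\cdot,Q_z)$ is increasing and left-continuous, the definitions of $T_{Q_z}^{-}$ and $T_{Q_z}^{+}$ yield $z\in T(Q_z)$ if and only if
\[
\bar{V}(z,Q_z)\le 0\qquad\text{and}\qquad \bar{V}(z+,Q_z):=\lim_{w\downarrow z}\bar{V}(w,Q_z)\ge 0 ,
\]
so it suffices to prove that both inequalities hold for $F$-almost every $z$.

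First I would convert the optimality hypothesis into a family of inequalities. By Assumption~\ref{assu:2}, $Z+a=m(\cdot\,;\beta')$ for some $\beta'\in\Theta$ whenever $|a|<\epsilon$, so the hypothesis gives $\E S_\eta(Z+a,Y)\ge\E S_\eta(Z,Y)$ for every $\eta\in\R$ and every $a$ with $|a|<\epsilon_0:=\min(\epsilon,\epsilon')$. In the difference the terms containing $\mathbbm{1}\{\eta\le Y\}$ cancel, and the inequality becomes $\E[(\mathbbm{1}\{\eta\le Z+a\}-\mathbbm{1}\{\eta\le Z\})V(\eta,Y)]\ge 0$; where the indicator difference is nonzero one has $|\eta-Z|<\epsilon'$, so monotonicity of $V(\cdot,Y)$ dominates the integrand by an $L^1$ function built from $V(Z-\epsilon',Y)$ and $V(Z+\epsilon',Y)$, and the expectation is finite. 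Conditioning on $Z$ (the indicator difference being $\sigma(Z)$-measurable) replaces $V(\eta,Y)$ by $\bar{V}(\eta,Q_Z)$, and taking $a>0$ and then $-a$ in place of $a$ gives, for every $\eta\in\R$ and $a\in(0,\epsilon_0)$,
\[
\E[\mathbbm{1}\{Z<\eta\le Z+a\}\,\bar{V}(\eta,Q_Z)]\ge 0\qquad\text{and}\qquad \E[\mathbbm{1}\{Z-a<\eta\le Z\}\,\bar{V}(\eta,Q_Z)]\le 0 .
\]

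The heart of the proof is to pass from these inequalities, which probe $\bar{V}$ at thresholds $\eta$ near $Z$, to pointwise statements at the ``diagonal''. Fix $c\in\R$ and $\delta>0$. Integrating the first inequality over $\eta\in[c,c+\delta]$, dividing by $a$, interchanging the $\eta$-integral with the expectation (legitimate by the bound above), and letting $a\downarrow 0$: for fixed $z$, monotonicity of $\bar{V}(\cdot,Q_z)$ gives $\frac{1}{a}\int_z^{z+a}\bar{V}(\eta,Q_z)\diff\eta\to\bar{V}(z+,Q_z)$, hence $\frac{1}{a}\int_c^{c+\delta}\mathbbm{1}\{z<\eta\le z+a\}\bar{V}(\eta,Q_z)\diff\eta\to\mathbbm{1}\{c\le z<c+\delta\}\,\bar{V}(z+,Q_z)$, and dominated convergence yields $\int_{[c,c+\delta)}\bar{V}(z+,Q_z)\,F(\diff z)\ge 0$. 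Treating the second inequality the same way, now with the average of $\bar{V}(\cdot,Q_z)$ over $[z-a,z]$ tending to $\bar{V}(z,Q_z)$ by left-continuity, gives $\int_{(c,c+\delta]}\bar{V}(z,Q_z)\,F(\diff z)\le 0$. Both hold for all $c\in\R$ and all $\delta>0$.

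Finally I would differentiate. The functions $\bar{V}(z,Q_z)$ and $\bar{V}(z+,Q_z)$ lie between the $F$-integrable functions $\bar{V}(z-\epsilon',Q_z)$ and $\bar{V}(z+\epsilon',Q_z)$, so $\nu_1(\diff z):=\bar{V}(z,Q_z)\,F(\diff z)$ and $\nu_2(\diff z):=\bar{V}(z+,Q_z)\,F(\diff z)$ are finite signed measures that are absolutely continuous with respect to $F$; by continuity from above, the last two displays upgrade to $\nu_1([z_0-\delta,z_0+\delta])\le 0$ and $\nu_2([z_0-\delta,z_0+\delta])\ge 0$ for all $z_0\in\R$, $\delta>0$. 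The Besicovitch differentiation theorem, valid for an arbitrary Radon measure on $\R$ and hence for $F$ (atoms included), then shows that for $F$-almost every $z_0$ the quotients $\nu_i([z_0-\delta,z_0+\delta])/F([z_0-\delta,z_0+\delta])$ converge as $\delta\downarrow 0$ to the densities $\bar{V}(z_0,Q_{z_0})$ and $\bar{V}(z_0+,Q_{z_0})$ of $\nu_1$ and $\nu_2$ with respect to $F$; as the quotients are $\le 0$ and $\ge 0$ respectively, these densities satisfy $\bar{V}(z_0,Q_{z_0})\le 0$ and $\bar{V}(z_0+,Q_{z_0})\ge 0$, which are exactly the conditions of the first paragraph. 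Hence $Z=m(X;\beta^{\ast})\in T(Q_Z)$ almost surely. I expect the ``diagonal'' passage of the third paragraph to be the only genuinely delicate step: differentiating the elementary-score inequalities at a fixed threshold $\eta$ would require $Z$ to possess a Lebesgue density at $\eta$, which is not assumed, so one must first integrate over a short threshold window and only then send the shift $a$ to $0$, leaning on the monotonicity of $\bar{V}(\cdot,Q_z)$; the integrability bookkeeping, the cancellation of the $\mathbbm{1}\{\eta\le Y\}$ terms, and the passage from half-open to closed intervals are routine.
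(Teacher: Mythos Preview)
Your argument is correct and arrives at the same conclusion, but by a genuinely different route from the paper.

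Both proofs begin from the same ``Lemma~2'' inequalities (you state them with $\bar V(\eta,Q_Z)$ after conditioning on $Z$; the paper keeps $V(\eta,Y)$ inside the expectation). From there the paper discretizes $Z=m(X;\beta^\ast)$ on dyadic grids $\theta_{n,k}=k2^{-n}$, sets $\bar Z_n=\E[V(Z_n,Y)\mid\mathcal A_n]$ and $\bar W_n=\E[V(W_n,Y)\mid\mathcal A_n]$, and uses the inequalities cellwise to see that $(\bar Z_n)$ is a nonpositive submartingale and $(\bar W_n)$ a nonnegative supermartingale; martingale convergence together with L\'evy's zero--one law then identifies the limits with $\E[V(Z,Y)\mid Z]$ and $\E[V(Z{+},Y)\mid Z]$. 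You instead integrate the elementary-score inequalities over a short threshold window $[c,c+\delta]$, send the shift $a\downarrow 0$ using monotonicity of $\bar V(\cdot,Q_z)$ to localize on the diagonal, and obtain that the signed measures $\bar V(z,Q_z)\,F(dz)$ and $\bar V(z{+},Q_z)\,F(dz)$ have the desired sign on every interval; Besicovitch differentiation with respect to the (possibly atomic) law $F$ of $Z$ then yields the pointwise sign $F$-a.e. The paper's argument stays entirely within probability, needing only standard martingale limit theorems; yours trades the martingale machinery for a differentiation theorem from geometric measure theory and is somewhat more direct, with no auxiliary filtration to build. Your remark that one must integrate over the threshold window \emph{before} sending $a\to 0$ is exactly the point at which a density assumption on $Z$ would otherwise sneak in; the paper circumvents the same issue by working on dyadic cells rather than at a single threshold.
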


\begin{remark}
If $\sigma(m(X;\beta^*)) \supseteq \sigma(g(X))$, then Theorem \ref{thm:1} implies that $m(X;\beta^*) = g(X)$. For example, this happens in a linear model $a + bX$, $X \in \mathbb{R}$ with $\beta^* \not=(a,0)$ for any $a$. It is also true if $X$ is discrete with values $\{x_1,\dots,x_K\}$ and all values of $m(x_i;\beta^*)$ are distinct. 
\end{remark}

For the proof of Theorem \ref{thm:1}, we need the following lemma. 

\begin{lemma}\label{lem:2}
Suppose that Assumptions \ref{assu:2} holds, and that there exists $\beta^* \in \Theta$ such that $\inf_{\beta \in \Theta} \E S_\eta(m(X;\beta),Y)= \E S_\eta(m(X;\beta^*),Y)$ for all $\eta \in \R$. Then, for all $a \in (0,\epsilon)$, $\eta \in \R$, we have
\begin{align*}
\E&\mathbbm{1}\{m(X;\beta^*) \in [\eta,\eta + a\}V(\eta + a,Y) \ge 0,\\
\E&\mathbbm{1}\{m(X;\beta^*) \in [\eta,\eta + a\}V(\eta,Y) \le 0.
\end{align*}
\end{lemma}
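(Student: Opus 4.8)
The plan is to exploit the optimality of $\beta^*$ by comparing $\E S_\eta(m(X;\beta^*),Y)$ against the score of a perturbed predictor. By Assumption \ref{assu:2}, for $a \in (0,\epsilon)$ the shifted model $m(\cdot\,;\beta^*) + a$ equals $m(\cdot\,;\beta')$ for some $\beta' \in \Theta$, and similarly for the shift $-a$. The optimality hypothesis then gives
\[
\E S_\eta(m(X;\beta^*),Y) \le \E S_\eta(m(X;\beta^*)+a,Y), \qquad
\E S_\eta(m(X;\beta^*),Y) \le \E S_\eta(m(X;\beta^*)-a,Y),
\]
so both differences $\E S_\eta(m(X;\beta^*)+a,Y) - \E S_\eta(m(X;\beta^*),Y)$ and $\E S_\eta(m(X;\beta^*)-a,Y) - \E S_\eta(m(X;\beta^*),Y)$ are nonnegative. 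The first step is to write these differences out explicitly using the definition \eqref{eq:generalElem} of the elementary score, $S_\eta(z,y) = (\mathbbm{1}\{\eta \le z\} - \mathbbm{1}\{\eta \le y\})V(\eta,y)$.

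The key observation is that $S_\eta(z,y)$ depends on $z$ only through $\mathbbm{1}\{\eta \le z\}$. Hence, writing $M := m(X;\beta^*)$, the difference $S_\eta(M+a,y) - S_\eta(M,y)$ equals $(\mathbbm{1}\{\eta \le M+a\} - \mathbbm{1}\{\eta \le M\})V(\eta,y) = \mathbbm{1}\{M \in [\eta-a,\eta)\}\,V(\eta,y)$, since $\eta \le M+a$ but $\eta > M$ precisely when $M + a \ge \eta > M$. After the substitution $\eta \mapsto \eta + a$ (valid because the inequality holds for all $\eta \in \R$) this reads $\mathbbm{1}\{M \in [\eta,\eta+a)\}\,V(\eta+a,y)$, and taking expectations gives the first claimed inequality. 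Symmetrically, $S_\eta(M,y) - S_\eta(M-a,y) = \mathbbm{1}\{M \in [\eta-a,\eta)\}\,V(\eta,y)$ as well, but now the optimality of $\beta^*$ against the downward shift says $\E\bigl(S_\eta(M-a,Y) - S_\eta(M,Y)\bigr) \ge 0$, i.e.\ $\E\mathbbm{1}\{M \in [\eta-a,\eta)\}V(\eta,Y) \le 0$; relabelling $\eta \mapsto \eta + a$ yields $\E\mathbbm{1}\{M \in [\eta,\eta+a)\}V(\eta,Y) \le 0$, the second claim.

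The only delicate point is integrability: we need $\E S_\eta(M \pm a, Y)$ and $\E S_\eta(M,Y)$ to be well-defined and finite so that the differences may be formed and the telescoping is legitimate. This is where one invokes that $V(\cdot,y)$ is increasing, together with the integrability hypotheses carried through from Theorem \ref{thm:1}; concretely, $|S_\eta(z,y)| \le |V(\eta,y)|$ and the relevant perturbed scores involve $V$ evaluated within a bounded window around $M$, so monotonicity of $V$ in its first argument bounds everything by an integrable envelope once $a$ is small enough. I expect this bookkeeping — rather than the algebraic identity for $S_\eta(M+a,y)-S_\eta(M,y)$, which is essentially immediate — to be the main obstacle, and it is presumably why the lemma restricts to $a \in (0,\epsilon)$ and the theorem assumes integrability of $V(M \pm \epsilon',Y)$.
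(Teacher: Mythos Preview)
Your approach is the paper's: compare $\E S_\eta(M,Y)$ with the expected score of the shifted predictor $M\pm a$ (available in $\Theta$ by Assumption~\ref{assu:2}), and read off the indicator identity from $S_\eta(z,y)=(\mathbbm{1}\{\eta\le z\}-\mathbbm{1}\{\eta\le y\})V(\eta,y)$.

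There is one computational slip in the second half. The condition $\eta\le M$ and $\eta>M-a$ is equivalent to $M\in[\eta,\eta+a)$, not $M\in[\eta-a,\eta)$, so
\[
S_\eta(M,y)-S_\eta(M-a,y)=\mathbbm{1}\{M\in[\eta,\eta+a)\}\,V(\eta,y).
\]
With this correction the second inequality $\E\mathbbm{1}\{M\in[\eta,\eta+a)\}V(\eta,Y)\le 0$ follows directly from $\E S_\eta(M,Y)\le\E S_\eta(M-a,Y)$, with no relabeling needed. Note also that your stated relabeling $\eta\mapsto\eta+a$, applied to your (incorrect) intermediate formula $\E\mathbbm{1}\{M\in[\eta-a,\eta)\}V(\eta,Y)\le 0$, would actually yield $\E\mathbbm{1}\{M\in[\eta,\eta+a)\}V(\eta+a,Y)\le 0$, contradicting the first claim; so two slips cancelled. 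The underlying idea is fine and matches the paper.
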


\begin{proof}
We have for any $a \in (0,\epsilon)$ and $\eta \in U$
\[
\E(\mathbbm{1}\{\eta \le m(X;\beta^*)\}-\mathbbm{1}\{\eta \le m(X;\beta^*) - a\})V(\eta,Y) \le 0.
\]
This is equivalent to 
\begin{equation*}
\E\mathbbm{1}\{m(X;\beta^*) \in [\eta,\eta+a)\}V(\eta,Y) \le 0.
\end{equation*}
On the other hand, we also obtain
\[
\E(\mathbbm{1}\{\eta + a \le m(X;\beta^*)\}-\mathbbm{1}\{\eta + a \le m(X;\beta^*) + a\})V(\eta + a,Y) \le 0,
\]
which is equivalent to 
\begin{equation*}
\E\mathbbm{1}\{m(X;\beta^*) \in [\eta,\eta+a)\}V(\eta + a,Y) \ge 0.
\end{equation*}
\end{proof}

\begin{proof}[Proof of Theorem \ref{thm:1}]
Let $\theta_{n,k} = k2^{-n}$ for $k \in \mathbb{Z}$, $n \in \mathbb{N}$. Define
\begin{align*}
Z_n &:= \sum_{k \in \mathbb{Z}} \theta_{n,k} \mathbbm{1}\{m(X;\beta^*) \in [\theta_{n,k},\theta_{n,k+1})\},\\
W_n &:= \sum_{k \in \mathbb{Z}} \theta_{n,k+1} \mathbbm{1}\{m(X;\beta^*) \in [\theta_{n,k},\theta_{n,k+1})\},
\end{align*}
and 
\[
\mathcal{A}_n := \sigma(Z_n) = \sigma(\{m(X;\beta^*) \in [\theta_{n,k},\theta_{n,k+1})\}, k \in \mathbb{Z}).
\]
We have $Z_n \le Z_{n+1}$, $W_n \ge W_{n+1}$ and thus by the monotonicity of $V(\cdot,y)$, we obtain $V(Z_n,Y) \le V(Z_{n+1},Y)$ and $V(W_n,Y) \ge V(W_{n+1},Y)$. Furthermore, $Z_n \uparrow m(X;\beta^*)$ and $W_n \downarrow m(X;\beta^*)$ as $n \to \infty$. Note that $\mathcal{A}_n \subset \mathcal{A}_{n+1}$ because 
\begin{multline*}
\{m(X;\beta^*) \in [\theta_{n,k},\theta_{n,k+1})\} \\= \{m(X;\beta^*) \in [\theta_{n+1,2k},\theta_{n+1,2k+1})\} \cup \{m(X;\beta^*) \in [\theta_{n+1,2k+1},\theta_{n+1,2k+2})\}.
\end{multline*}

We define $\bar{Z}_n := \E(V(Z_n,Y)|\mathcal{A}_n)$ and $\bar{W}_n := \E(V(W_n,Y)|\mathcal{A}_n)$. For $n$ large enough, Lemma \ref{lem:2} implies that $\bar{Z}_n \le 0$ and $\bar{W}_n \ge 0$ as the generator of $\mathcal{A}_n$ consists of disjoint sets. Furthermore, 
\begin{align*}
\E(\bar{Z}_{n+1}|\mathcal{A}_n) &= \E(\E(V(Z_{n+1},Y)|\mathcal{A}_{n+1})|\mathcal{A}_n) \\
&= \E(V(Z_{n+1},Y)|\mathcal{A}_n)\\
&\ge \E(V(Z_n,Y)|\mathcal{A}_n) = \bar{Z}_n,
\end{align*}
and, analogously, $\E(\bar{W}_{n+1}|\mathcal{A}_n) \le \bar{W}_n$. Therefore $(\bar{Z}_n)_n$ is a non-positive sub-martingale and $(\bar{W}_n)_n$ is a non-negative super-martingale with respect to $(\mathcal{A}_n)_n$. \citet[60.2 Korrolar 1]{Bauer1974} implies that there exists $\bar{Z}_{\infty} \le 0$ and $\bar{W}_{\infty} \ge 0$ integrable such that $\bar{Z}_n \to \bar{Z}_{\infty}$, $\bar{W}_n \to \bar{W}_{\infty}$ almost surely as $n \to \infty$ and $\E(\bar{Z}_{\infty}|\mathcal{A}_n) \ge \bar{Z}_n$, $\E(\bar{W}_{\infty}|\mathcal{A}_n) \le \bar{W}_n$.

The continuity and monotonicity of $V(\cdot,Y)$ yields that $V(Z_n,Y) \uparrow V(m(X;\beta^*),Y)$ and $V(W_n,Y) \downarrow V(m(X;\beta^*)+,Y)= \lim_{h \downarrow 0} V(m(X;\beta^*)+h,Y)$ almost surely as $n \to \infty$. 

L\'evy's Zero-One-Law yields that 
\[
0 \le \E(V(m(X;\beta^*),Y) - V(Z_n,Y)\mid \mathcal{A}_n) \to \E(V(m(X;\beta^*),Y)|\mathcal{A}_{\infty}) - \bar{Z}_\infty
\]
almost surely as $n \to \infty$, where $\mathcal{A}_{\infty} = \bigcup_n \mathcal{A}_n = \sigma(m(X;\beta^*))$. Using the integrability assumption, we obtain that
\[
0 \le \E(V(m(X;\beta^*),Y) - V(Z_n,Y)) \to 0,
\]
hence
\[
\E(V(m(X;\beta^*),Y)|m(X;\beta^*)) = \bar{Z}_\infty \le 0. 
\]
Analogous arguments show that
\[
\E(V(m(X;\beta^*)+,Y)|m(X;\beta^*)) = \bar{W}_\infty \ge 0,
\]
which yields the claim.
\end{proof}

\section{Pareto-optimal parameters}\label{sec:Pareto}

We consider the same setting as in Section \ref{sec:T-cal}, that is $T$ is an identifiable functional induced by the identification function $V$. We consider consistent loss functions $L_H$ of the form \eqref{eq:Choquet}. 

Typically, it will happen that there is no parameter $\beta^*$ for the model $m(x;\beta)$, $\beta \in \Theta$ that is preferred under all loss functions $L_H$. If we consider all loss functions to be important, it is natural to study the parameters of the model that are Pareto optimal with respect to all loss functions. In this section, we make the notion of Pareto optimal parameters precise, study some of their elementary properties, and give examples.

\begin{definition}
A parameter $\beta_1 \in \Theta$ is \emph{dominated} by a parameter $\beta_2 \in \Theta$ if 
\[
\E L_H(m(X;\beta_2),Y) \le \E L_H(m(X;\beta_1),Y)
\]
for all measures $H$. It is \emph{strictly dominated} if the inequality is strict for some measure $H$.
A parameter $\beta\in \Theta$ is \emph{Pareto optimal} if it is not strictly dominated by any other parameter.
\end{definition}
In other words, a parameter $\beta$ is Pareto optimal if for all $\beta'\not=\beta$, either, there exists a $H$ such that
\begin{equation*}
\E L_H(m(X;\beta),Y) < \E L_H(m(X;\beta'),Y),
\end{equation*}
or, for all $H$, we have
\begin{equation*}
\E L_H(m(X;\beta),Y) = \E L_H(m(X;\beta'),Y).
\end{equation*}
It is clear that a parameter that is the unique minimizer of some consistent loss function $L_H$ is Pareto optimal. Also, any parameter that minimizes all loss functions simultaneously is Pareto optimal. Furthermore, in this case, if the model is correctly specified with a unique true parameter $\beta^* \in \Theta$, this is the only Pareto optimal parameter.

Dominance and Pareto optimality can also be formulated in terms of the elementary loss functions $S_\eta$ as defined at \eqref{eq:generalElem}. A parameter $\beta_1\in\Theta$ is dominated by $\beta_2\in\Theta$ if for all $\eta \in \mathbb{R}$, we have
\[
\E S_\eta(m(X;\beta_2),Y) \le \E S_\eta(m(X;\beta_1),Y)
\]
for all $\eta$. If the inequality is strict for some $\eta$, then $\beta_2$ strictly dominates $\beta_1$. 

A parameter $\beta \in \Theta$ is Pareto optimal if for all $\beta'\not=\beta$, either, there exists an $\eta$ such that
\begin{equation*}
\E S_\eta(m(X;\beta),Y) < \E S_\eta(m(X;\beta'),Y),
\end{equation*}
or, for all $\eta$, we have
\begin{equation*}
\E S_\eta(m(X;\beta),Y) = \E S_\eta(m(X;\beta'),Y).
\end{equation*}

Generally, there are more Pareto optimal parameters than unique minimizers of consistent loss functions. This is a classical fact in multi-objective optimization \citep{Geoffrion1968}. Since the map 
\[
\Theta \times \mathbb{R} \to \mathbb{R}, \quad (\beta,\eta) \mapsto f(\beta,\eta) = \E S_\eta(m(X;\beta),Y)
\]
is typically not convex, it is a hard problem to characterize which Pareto optimal parameters are minimizers of consistent loss functions.

We conclude this section with some examples where the Pareto optimal parameters of a model can be computed explicitly.

\begin{example}
Suppose that $X$ and $\epsilon$ are independent and standard normally distributed. Let
\[
Y = X^2 + \epsilon,
\]
and consider $m(x;b) = bx$, $b \in \mathbb{R}$, as a model for the conditional mean of $Y$. For $\eta \in \R$, define $b(\eta) = \arg\min_b \E S_\eta(Y,bX) = \arg\min_b \E(\mathbbm{1}\{\eta \le bX\}(\eta - Y))$. We obtain
\begin{align*}
\E(\mathbbm{1}\{\eta \le bX\}(\eta - Y)) &= \E(\mathbbm{1}\{\eta \le bX\}(\eta - X^2))
= \begin{cases}\int_{\eta/b}^{\infty}(\eta - x^2) \varphi(x) dx, & b > 0,\\
\int_{-\infty}^{\eta/b}(\eta - x^2) \varphi(x) dx, & b < 0,\\
\mathbbm{1}\{\eta \le 0\}(\eta - \E(X^2)), & b = 0.\end{cases}
\end{align*}
For $b > 0$, we obtain
\[
\frac{d}{db}\int_{\eta/b}^{\infty}(\eta - x^2) \varphi(x) dx = \frac{(b^2 - \eta)\eta^2}{b^4}\varphi(\eta/b),
\]
which is increasing in $b$ for $\eta < 0$ and has a local minimum at $b = \sqrt{\eta}$ for $\eta > 0$. For $b < 0$, we have
\[
\frac{d}{db}\int_{-\infty}^{\eta/b}(\eta - x^2) \varphi(x) dx = \frac{(\eta-b^2)\eta^2}{b^4}\varphi(\eta/b),
\]
which is decreasing in $b$ for $\eta < 0$ and has a local minimum at $b = -\sqrt{\eta}$ for $\eta > 0$. At the local minimum the function takes the value
\[
\int_{-\infty}^{-\sqrt{\eta}}(\eta - x^2) \varphi(x) dx  = \int_{\sqrt{\eta}}^{\infty}(\eta - x^2) \varphi(x) dx. 
\]
Therefore, 
\[
b(\eta) = \begin{cases} \{-\sqrt{\eta},\sqrt{\eta}\},&\eta > 0,\\
0,&\eta \le 0,\end{cases}
\]
and the set of Pareto optimal parameters is the whole real line $\R$. 
\end{example}

\begin{example}\label{ex:Anja}
This  example is taken from \citet[Example 4.4.3]{Muhlemann2021}. 
    We consider $X,Y \in \R$ and the model $m(x;(\beta_0,\beta_1)) = \beta_0 + \beta_1 x$, $(\beta_0,\beta_1) \in \R \times (0,\infty)$ for the conditional mean of $Y$ given $X$. Suppose first that
    \[
    \E(Y |X=x) = g_1(x) = \frac{e^x}{1+e^x}, \quad x \in \R. 
    \]
    Then, the set of Pareto optimal parameters is given by all $\beta=(\beta_0,\beta_1)$ of the form
    \[
    \beta_0 = \eta\left(1 - (1-\eta)\log\left(\frac{\eta}{1-\eta}\right)\right), \quad \beta_1 = \eta(1-\eta), \quad \eta \in [0,1],
    \]
    and all $\beta$ such that $\eta \mapsto \eta - g_1((\eta-\beta_0)/\beta_1)$ has at least two zeros. As a second example we consider
   \[
    \E(Y |X=x) = g_2(x) = x^3, \quad x \in \R. 
    \]
    Then, the set of Pareto optimal parameters is given by all $\beta=(\beta_0,\beta_1)$ of the form
    \[
    \beta_0 = -2\eta, \quad \beta_1 = 3\eta^{2/3}, \quad \eta \in \R,
    \]
    and all $\beta$ such that $\eta \mapsto \eta - g_1((\eta-\beta_0)/\beta_1)$ has at least two zeros. Both sets of Pareto optimal parameters are illustrated in Figure \ref{fig:1}.
\end{example}
\begin{figure}
    \centering
    \includegraphics[width=0.45\linewidth]{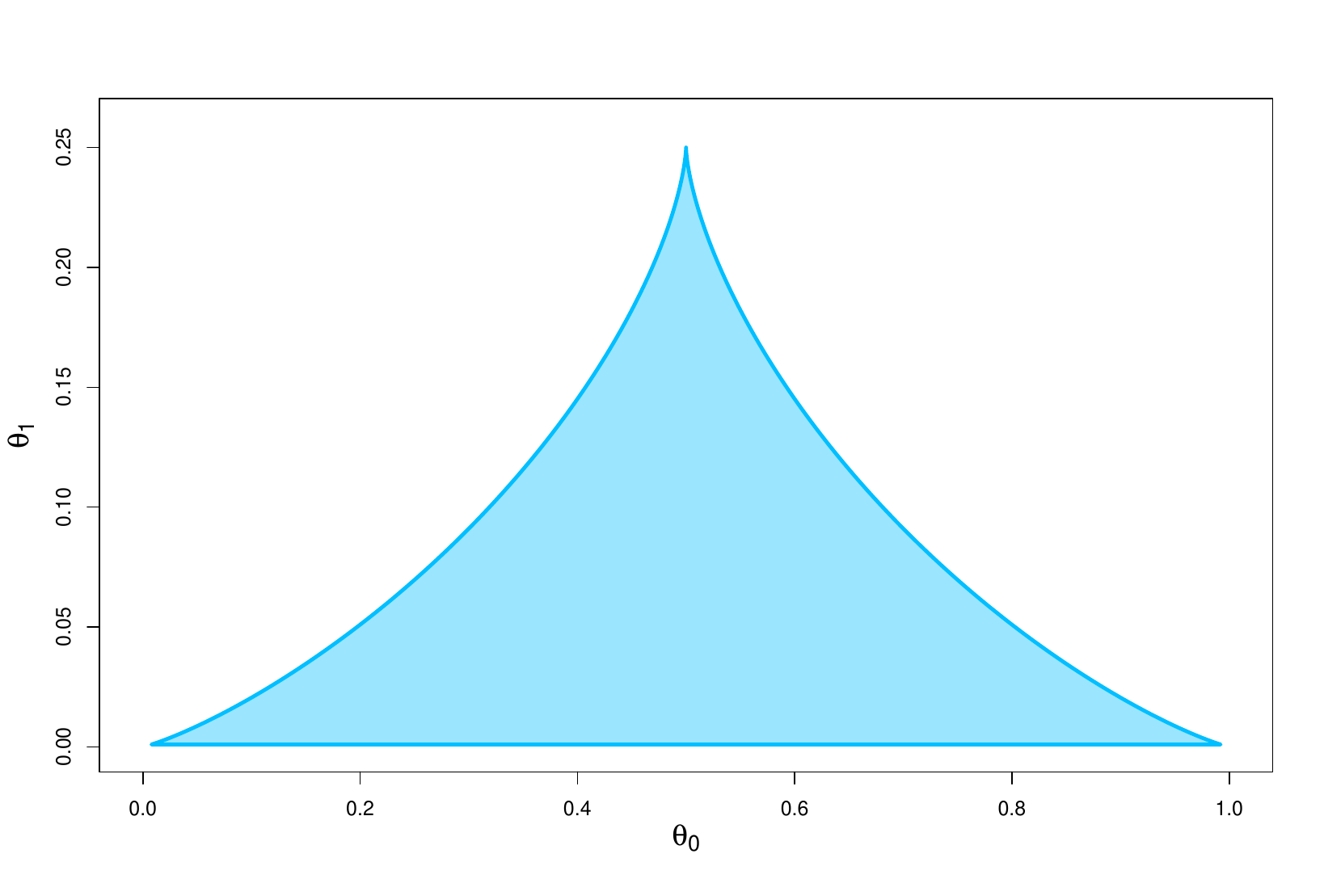}
    \includegraphics[width=0.45\linewidth]{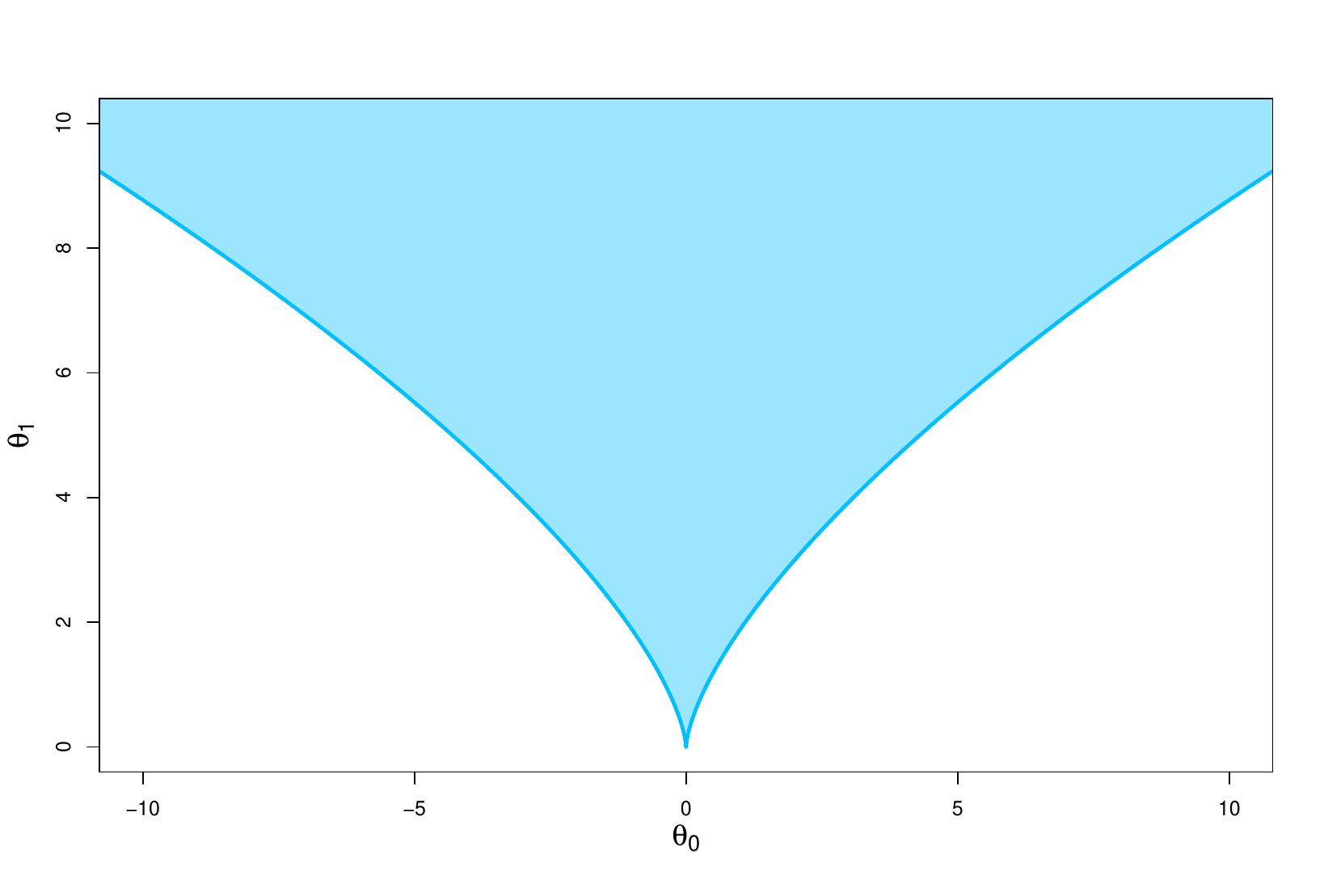}
    \caption{Pareto optimal parameters in blue for the model in Example \ref{ex:Anja} and regression function $g_1$ and $g_2$ in the left and right panel, respectively.}
    \label{fig:1}
\end{figure}

\section{Discussion}\label{sec:discussion}

We have considered parameter estimation in semi-parametric models for identifiable functionals under the entire class of consistent loss functions. This approach differs from classical approaches in statistics: We fix the functional that we are modeling and allow for all consistent loss functions. Classically, there is always a fixed loss.

It turns out that a model is T-calibrated if one parameter of the model is preferred under all consistent loss functions. If there is no single preferred parameter under all consistent loss functions, we studied properties of the set of all Pareto optimal parameters. 

Intuitively, if the model is not correctly specified, the set of Pareto optimal parameters should inform how severe the misspecification is, and possibly also give some indication how the model specification could be improved. Unfortunately, we found this challenging even for conditional mean models with a one-dimensional covariate $X$ and an increasing regression function. Furthermore, in order for any procedure based on Pareto optimal parameters to be practically useful, the set of Pareto optimal parameters has to be estimated from finite samples, which is computationally challenging. 

Nevertheless, we believe that Theorem \ref{thm:1} is of theoretical interest to understand the role of the loss function in regression problems. 


\bibliographystyle{plainnat}
\bibliography{biblio}

\begin{thebibliography}{17}
\providecommand{\natexlab}[1]{#1}
\providecommand{\url}[1]{\texttt{#1}}
\expandafter\ifx\csname urlstyle\endcsname\relax
  \providecommand{\doi}[1]{doi: #1}\else
  \providecommand{\doi}{doi: \begingroup \urlstyle{rm}\Url}\fi

\bibitem[Bauer(1974)]{Bauer1974}
H.~Bauer.
\newblock \emph{Wahrscheinlichkeitstheorie}.
\newblock de Gruyter, Berlin, 1974.

\bibitem[Dawid(1984)]{Dawid1984}
A.~P. Dawid.
\newblock Statistical theory: {T}he prequential approach.
\newblock \emph{Journal of the Royal Statistical Society: Series A},
  147:\penalty0 278--290, 1984.

\bibitem[Diebold et~al.(1998)Diebold, Gunther, and Tay]{DieboldGuntherETAL1998}
F.~X. Diebold, T.~A. Gunther, and A.~S. Tay.
\newblock Evaluating density forecasts with applications to financial risk
  management.
\newblock \emph{International Economic Review}, 39:\penalty0 863--883, 1998.

\bibitem[Dimitriadis et~al.(2024)Dimitriadis, Fissler, and
  Ziegel]{DimitriadisFisslerETAL2024}
T.~Dimitriadis, T.~Fissler, and J.~Ziegel.
\newblock Characterizing {M}-estimators.
\newblock \emph{Biometrika}, 111:\penalty0 339--346, 2024.

\bibitem[Ehm et~al.(2016)Ehm, Gneiting, Jordan, and
  Kr\"uger]{EhmGneitingETAL2016}
W.~Ehm, T.~Gneiting, A.~Jordan, and F.~Kr\"uger.
\newblock Of quantiles and expectiles: Consistent scoring functions, {C}hoquet
  representations, and forecast rankings (with discussion).
\newblock \emph{Journal of the Royal Statistical Society: Series B},
  78:\penalty0 505--562, 2016.

\bibitem[Geoffrion(1968)]{Geoffrion1968}
{A. M.} Geoffrion.
\newblock Proper efficiency and the theory of vector optimisation.
\newblock \emph{Journal of Mathematical Analysis and Application}, 22:\penalty0
  618--630, 1968.

\bibitem[Gneiting(2011)]{Gneiting2011}
T.~Gneiting.
\newblock Making and evaluating point forecasts.
\newblock \emph{J. Amer. Statist. Assoc.}, 106:\penalty0 746--762, 2011.

\bibitem[Gneiting and Resin(2023)]{GneitingResin2023}
T.~Gneiting and J.~Resin.
\newblock Regression diagnostics meets forecast evaluation: conditional
  calibration, reliability diagrams, and coefficient of determination.
\newblock \emph{Electron. J. Stat.}, 17\penalty0 (2):\penalty0 3226--3286,
  2023.

\bibitem[Jordan et~al.(2022)Jordan, M\"uhlemann, and
  Ziegel]{JordanMuhlemannETAL2022}
A.~Jordan, A.~M\"uhlemann, and J.~F. Ziegel.
\newblock Characterizing the optimal solutions to the isotonic regression
  problem for identifiable functionals.
\newblock \emph{Annals of the Institute of Statistical Mathematics},
  74:\penalty0 489--514, 2022.

\bibitem[M\"uhlemann(2021)]{Muhlemann2021}
A.~M\"uhlemann.
\newblock \emph{The role of loss functions in regression problems}.
\newblock PhD thesis, University of Bern, Bern, Switzerland, 2021.

\bibitem[Patton(2020)]{Patton2020}
A.~J. Patton.
\newblock Comparing possibly misspecified forecasts.
\newblock \emph{Journal of Business \& Economic Statistics}, 38:\penalty0
  796--809, 2020.

\bibitem[Savage(1971)]{Savage1971}
L.~J. Savage.
\newblock Elicitation of personal probabilities and expectations.
\newblock \emph{Journal of the American Statistical Association}, 66:\penalty0
  783--801, 1971.

\bibitem[Steinwart et~al.(2014)Steinwart, Pasin, Williamson, and
  Zhang]{SteinwartPasinETAL2014}
I.~Steinwart, C.~Pasin, R.~Williamson, and S.~Zhang.
\newblock Elicitation and identification of properties.
\newblock \emph{JMLR: Workshop and Conference Proceedings}, 35:\penalty0 1--45,
  2014.

\bibitem[Str\"ahl and Ziegel(2017)]{StrahlZiegel2017}
C.~Str\"ahl and J.~F. Ziegel.
\newblock Cross-calibration of probabilistic forecasts.
\newblock \emph{Electronic Journal of Statistics}, 11:\penalty0 608--639, 2017.

\bibitem[Tarpey and Flury(1996)]{TarpeyFlury1996}
T.~Tarpey and B.~Flury.
\newblock Self-consistency: {A} fundamental concept in statistics.
\newblock \emph{Statistical Science}, 11:\penalty0 229--243, 1996.

\bibitem[W\"uthrich and Ziegel(2024)]{WuthrichZiegel2024}
M.~W\"uthrich and J.~Ziegel.
\newblock Isotonic recalibration under a low signal-to-noise ratio.
\newblock \emph{Scandinavian Actuarial Journal}, 2024:\penalty0 279--299, 2024.

\bibitem[Ziegel(2016)]{Ziegel2016}
J.~F. Ziegel.
\newblock Discussion of the paper: ``{O}f quantiles and expectiles:
  {C}onsistent scoring functions, {C}hoquet representations, and forecast
  rankings''.
\newblock \emph{J. Roy. Statist. Soc. Ser. B}, 2016.

\end{thebibliography}

\end{document}